\documentclass[11pt, reqno]{amsart}
\usepackage{hyperref}
\usepackage{cmap}                        
\usepackage[cp1251]{inputenc}            
\usepackage[english]{babel}
\usepackage[left=1in,right=1in,top=1in,bottom=1.5in]{geometry} 
\usepackage{amssymb,amsmath, amsthm, amscd,ifthen}
\usepackage{graphicx}
\usepackage{epigraph, xcolor, hyperref}

\newtheorem*{thm*}{Theorem}
\newcommand{\ff}{{\mathcal F}}

\newcommand{\G}{{\mathcal G}}

\newtheorem*{cla*}{Claim}

\newtheorem{thm}{Theorem}
\newtheorem{gypo}{Conjecture}

\newtheorem{lem}[thm]{Lemma}
\newtheorem{cla}[thm]{Claim}

\date{}

\newtheorem{defn}[thm]{Definition}

\date{}
\title{Intersection problems and a correlation inequality for integer sequences}
\author{Peter Frankl}\address{R\'enyi Institute, Budapest, Hungary; Email: {\tt peter.frankl@gmail.com}}

\author{Andrey Kupavskii}
\address{Moscow Institute of Physics and Technology, Russia; Email: {\tt kupavskii@ya.ru}.} 


\begin{document}
\maketitle
\begin{abstract} Let us consider a collection $\G$ of codewords of length $n$ over an alphabet of size $s$. Let $t_1,\ldots, t_s$ be nonnegative integers.  What is the maximum of $|\G|$ subject to the condition that any  two codewords should have at least $t_i$ positions where both have letter $i$ ($1\le i\le s$). In the case $s=2$ it is a longstanding open question. Quite surprisingly we obtain an almost complete answer for $s\ge 3$. The main tool is a correlation inequality.
\end{abstract}

For an integer $s\ge 2$ let $[s]=\{1,2,\ldots, s\}$ be the standard $s$-set. Let $[s]^n$ denote the collection of all $s^n$ integer sequences $\vec a = (a_1,\ldots, a_n)$ with $a_i\in [s]$. Calling $[s]$ the alphabet and each $\vec a$ a codeword, $[s]^n$ is the central object in {\it coding theory} (cf., e.g., \cite{L}).

Another way to look at these sequences is to consider $n$ pairwise disjoint sets $Y_1,\ldots, Y_n$ and consider the complete $n$-partite $n$-graph $Y_1\times\ldots \times Y_n$ consisting of all vectors $\vec y = (y_1,\ldots, y_n)$, $y_i\in Y_i$. If $|Y_1| = \ldots = |Y_n|=s$, it is essentially the same combinatorial object.

For two vectors $\vec y$ and $\vec z$ let us define their {\it intersection} ({\it meet}) $\vec y\wedge \vec z = (w_1,\ldots, w_n)$, where
$$w_i = \begin{cases}y_i, \text{ if } y_i = z_i\\
0, \text{ if } y_i\ne z_i.\end{cases}$$

A family $\ff\subset Y_1\times \ldots \times Y_n$ is called {\it $t$-intersecting} if for all $\vec y, \vec z\in \ff$,  $\vec y\wedge \vec z$ has at least $t$ non-zero coordinates. Note that in the $n$-partite setting it is the same as requiring $|\{y_1,\ldots, y_n\}\cap \{z_1,\ldots, z_n\}|\ge t$.

In the case of $t=1$ we omit the $1$ and say simply {\it intersecting}. Let us recall two, by now classical results, analogous to the famous Erd\H os--Ko--Rado Theorem \cite{EKR}.

\begin{thm}[Deza and Frankl \cite{DF}]\label{thmold1} Suppose that $\ff\subset Y_1\times \ldots \times Y_n$ is intersecting and $|Y_1|\le |Y_i|$, $i = 2,\ldots, n$. Then
\begin{equation}\label{eq1} |\ff|\le \prod_{2\le i\le n} |Y_i|.\end{equation}
\end{thm}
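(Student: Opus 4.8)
The plan is to prove the Deza--Frankl bound $|\ff|\le \prod_{2\le i\le n}|Y_i|$ by a projection (shadow-type) argument onto the coordinate where the ground set is smallest. Without loss of generality assume $Y_1$ is a smallest part, and consider the map $\pi\colon Y_1\times\dots\times Y_n\to Y_2\times\dots\times Y_n$ that simply forgets the first coordinate. The product $\prod_{2\le i\le n}|Y_i|$ is exactly the size of the target of $\pi$, so it suffices to show that $\pi$ restricted to $\ff$ is injective.

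The key step is therefore the following claim: if $\ff$ is intersecting, then no two distinct members of $\ff$ can agree in all of the coordinates $2,\dots,n$. Suppose for contradiction that $\vec y,\vec z\in\ff$ are distinct but $y_i=z_i$ for all $i\ge 2$; then necessarily $y_1\ne z_1$, so $\vec y\wedge\vec z$ has a zero in the first coordinate and equals $\vec y$ elsewhere on the last $n-1$ coordinates. This alone does \emph{not} immediately contradict intersecting-ness (the meet could still have a nonzero coordinate among $2,\dots,n$), so the argument needs a second ingredient: one pushes $\vec y$ around in the first coordinate. Concretely, fix the common value $\vec u=(y_2,\dots,y_n)$ and let $A=\{a\in Y_1:(a,\vec u)\in\ff\}$; the assumption says $|A|\ge 2$. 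I would then argue that for \emph{every} vector $\vec w=(w_1,\dots,w_n)\in\ff$, looking at its meets with the whole family $\{(a,\vec u):a\in A\}$ forces a contradiction once $|A|\ge 2$ — because at most one value $a\in A$ can satisfy $w_1=a$, so for the other choice of $a$ the meet $\vec w\wedge(a,\vec u)$ must already attain $t=1$ nonzero coordinates purely within positions $2,\dots,n$, i.e. $\vec w$ agrees with $\vec u$ somewhere on $\{2,\dots,n\}$; this has to hold for $\vec w=(a,\vec u)$ itself and for all translates, and a short case analysis (or an averaging/counting over $Y_1$) yields that $\ff$ cannot be too large — in fact one shows directly that the fibers of $\pi$ over $\ff$ all have size $1$.

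A cleaner route, which I would actually write up, replaces the ad hoc fiber analysis by a compression (shifting) argument. Define the standard down-shift $S_{ab}$ in coordinate $1$ (replace value $b$ by value $a$ whenever the result is not already in $\ff$); these operations preserve $|\ff|$ and preserve the intersecting property, and after finitely many of them $\ff$ becomes shifted in coordinate $1$. For a shifted intersecting family one shows that if $(a,\vec u),(b,\vec u)\in\ff$ with $a\ne b$ then, using shiftedness, one may assume all of $\{(c,\vec u):c\in Y_1\}\subset\ff$; but then taking $\vec w=(c,\vec u)$ and $\vec w'=(c',\vec u)$ with $\vec u$ having, say, a coordinate that can be varied, one contradicts $1$-intersecting unless $n=1$ — more precisely, two full fibers over different base vectors $\vec u,\vec u'$ can be made disjoint after shifting, killing intersection. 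Hence $\pi|_\ff$ is injective and \eqref{eq1} follows; equality is witnessed by a ``dictatorship'' family $\{\vec y: y_1 = c\}$ for a fixed $c\in Y_1$.

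The main obstacle is making the fiber/shifting argument airtight: the naive projection is not injective for an arbitrary intersecting family (e.g. a single fiber $\{(a,\vec u):a\in Y_1\}$ is intersecting when $n\ge 2$? — no, its meet is zero in coordinate $1$ and equals $\vec u$ elsewhere, so it \emph{is} intersecting, and it has size $|Y_1|\le\prod_{2\le i\le n}|Y_i|$ only because $n\ge 2$), so one really must use that $Y_1$ is \emph{smallest} together with shiftedness to rule out the bad configurations; the delicate point is checking that after shifting in coordinate $1$ two distinct base vectors cannot both carry large fibers while keeping the family intersecting, and that the shift in coordinate $1$ does not destroy intersecting-ness — this last fact is the familiar but slightly technical lemma that $t$-intersecting families are preserved under such compressions, which I would state and prove as a preliminary.
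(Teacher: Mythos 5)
Your proposal hinges on the claim that the projection $\pi$ forgetting the first coordinate is injective on $\ff$, and that claim is false --- you exhibit the counterexample yourself. A full fiber $\{(a,\vec u):a\in Y_1\}$ over a single base vector $\vec u$ \emph{is} intersecting for $n\ge 2$ (all its members share $\vec u$ on coordinates $2,\dots,n$), and $\pi$ collapses it to a point. The shifting patch does not rescue the argument: take $|Y_2|=|Y_1|$ and the dictatorship $\ff=\{\vec y: y_2=c\}$. This family is intersecting, is already invariant under any relabelling of values in coordinate $1$ (so no compression in coordinate $1$ changes it), attains the bound $\prod_{2\le i\le n}|Y_i|$ with equality, and yet \emph{every} nonempty fiber of $\pi$ over it has size $|Y_1|$. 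So extremal families need not have injective projection, and no statement of the form ``after shifting, two distinct base vectors cannot both carry full fibers'' can be true. The approach cannot be repaired; injectivity of $\pi|_\ff$ is simply not the mechanism behind the bound.

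The standard proof (and the natural one here) goes by partitioning the ambient product rather than projecting the family. Identify each $Y_i$ with $\mathbb{Z}_{|Y_i|}$ and, for each $\vec a\in Y_1\times\dots\times Y_n$, form the class $\{(a_1+j,\,a_2+j,\,\dots,\,a_n+j): 0\le j<|Y_1|\}$, where addition in coordinate $i$ is modulo $|Y_i|$. If $0\le j<j'<|Y_1|$ then $0<j'-j<|Y_1|\le |Y_i|$ for every $i$, so the two resulting vectors differ in \emph{every} coordinate and their meet is identically zero; hence an intersecting family contains at most one vector from each class. These classes partition $Y_1\times\dots\times Y_n$ into $\bigl(\prod_{1\le i\le n}|Y_i|\bigr)/|Y_1|=\prod_{2\le i\le n}|Y_i|$ parts, which gives \eqref{eq1}. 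Note where the hypothesis $|Y_1|\le |Y_i|$ enters: it guarantees $j'-j\not\equiv 0 \pmod{|Y_i|}$, i.e.\ that each class is a ``rainbow'' set of pairwise disjoint vectors of full size $|Y_1|$. I recommend you rewrite the proof along these lines; your equality example $\{\vec y: y_1=c\}$ is correct and can stay.
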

Let us note that the case $|Y_1| = \ldots = |Y_n|$ goes back to Meyer \cite{M}.
\begin{thm}[Frankl and F\"uredi \cite{FF6}]\label{thmold2}
  Suppose that $s>t\ge 1$ and $\ff\subset [s]^n$ is $t$-intersecting. then
  \begin{equation}\label{eq2}
    |\ff|\le s^{n-t}.
  \end{equation}
\end{thm}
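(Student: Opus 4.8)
\emph{Plan of proof.} The first move is a \emph{compression}. Order $[s]$ as $1<2<\dots<s$; for a coordinate $i$ and letters $a<b$ let the shift $S_{i,a,b}$ replace, in every $\vec f\in\ff$ with $f_i=b$, the entry $f_i$ by $a$ whenever the new vector is not already present. The usual case check for intersection problems shows $S_{i,a,b}$ keeps $\ff$ $t$-intersecting and preserves $|\ff|$, so, applying all shifts repeatedly, we may assume $\ff$ is a \emph{down-set}: $\vec f\in\ff$ and $g_i\le f_i$ for all $i$ imply $\vec g\in\ff$. Then $\vec 1:=(1,\dots,1)\in\ff$, so every $\vec f\in\ff$ has at least $t$ coordinates equal to $1$ (those being the nonzero coordinates of $\vec f\wedge\vec 1$).

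\emph{Peeling the last coordinate.} Write $\ff_c=\{\vec f\in\ff:f_n=c\}$ and let $\ff'_c\subseteq[s]^{n-1}$ denote its projection. Decreasing the last coordinate, together with the down-set property, gives $\ff'_s\subseteq\ff'_{s-1}\subseteq\dots\subseteq\ff'_1$. Two vectors of $\ff_c$ agree in coordinate $n$, so $\ff'_c$ is $(t-1)$-intersecting; a vector of $\ff_c$ and one of $\ff_{c'}$ with $c\ne c'$ disagree there, so their projections agree in at least $t$ coordinates. Since $\ff'_c\subseteq\ff'_1$ for $c\ge 2$, this says exactly that each such $\ff'_c$ is $t$-intersecting with all of $\ff'_1$ (hence in particular $t$-intersecting). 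By the chain property, $|\ff|=\sum_{c=1}^{s}|\ff'_c|\le|\ff'_1|+(s-1)|\ff'_2|$. Therefore Theorem~\ref{thmold2} reduces to the claim $(\ast)_t$: \emph{if $\aaa\subseteq[s]^m$ is $(t-1)$-intersecting and $\bb\subseteq\aaa$ is $t$-intersecting with all of $\aaa$, then $|\aaa|+(s-1)|\bb|\le s^{m-t+1}$}; one applies it with $m=n-1$, $\aaa=\ff'_1$, $\bb=\ff'_2$.

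\emph{Proving $(\ast)_t$.} I would induct on $t$. For $t=1$ it follows immediately from Theorem~\ref{thmold1}: adjoin a new coordinate and set $\h=(\aaa\times\{1\})\cup(\bb\times\{2,\dots,s\})\subseteq[s]^{m+1}$; two vectors of $\h$ with equal last coordinate agree there, while if the last coordinates differ then one of them lies in $\bb$ and is intersecting with the other (which lies in $\aaa\supseteq\bb$), so $\h$ is intersecting and $|\h|=|\aaa|+(s-1)|\bb|\le s^{m}$. For $t\ge2$ one compresses the pair $(\aaa,\bb)$ to down-sets simultaneously, peels a coordinate, and reduces---after tracking how the nested chains of projections of $\aaa$ and of $\bb$ interlock---to a multi-family strengthening of $(\ast)_{t-1}$. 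The feature that makes this close, and the one spot where $s>t$ is genuinely needed, is that once a projection of $\aaa$ reaches the size of a maximal ``star'' (a family obtained by fixing some coordinates to $1$), the matching projection of $\bb$ must be empty: a vector of it would have to agree in at least $t$ coordinates with every member of the star, yet a member of the star can be chosen to disagree with it outside the fixed coordinates.

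\emph{The crux.} Everything rests on $(\ast)_t$ and on the bookkeeping in its inductive proof. Even the trivial bound $|\aaa|\le s^{m-t+1}$ (the estimate for $(t-1)$-intersecting families, which is the content of $(\ast)_{t-1}$-type reasoning) already exhausts the right-hand side of $(\ast)_t$, so one can afford \emph{no} positive contribution from $\bb$ unless $\aaa$ is smaller by exactly that amount. Packaging enough of the interaction between $\aaa$ and $\bb$ into the induction hypothesis, so that a large $\aaa$ really does suppress $\bb$ while Theorem~\ref{thmold1} still serves as the base case, is the real difficulty.
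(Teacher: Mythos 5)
Your reductions are fine as far as they go: the shift $S_{i,a,b}$ does preserve the $t$-intersecting property, iterating it does yield a coordinatewise down-set, the chain $\ff'_s\subseteq\cdots\subseteq\ff'_1$ and the cross-intersection properties of the projections are correctly derived, and the base case of $(\ast)_t$ at $t=1$ via the auxiliary family $\h\subseteq[s]^{m+1}$ and Theorem~\ref{thmold1} is a genuinely nice, complete argument. (For the record, the paper quotes Theorem~\ref{thmold2} from \cite{FF6} without proof, so there is no in-paper argument to compare against.)

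The problem is that the induction step of $(\ast)_t$ for $t\ge 2$ --- which is where the entire content of the theorem lives --- is not actually carried out. Note that $(\ast)_t$ applied with $\aaa=\bb=\ff$ (a $t$-intersecting family is $(t-1)$-intersecting and cross-$t$-intersects itself) gives $s|\ff|\le s^{m-t+1}$, i.e.\ $(\ast)_t$ in dimension $m$ already \emph{contains} the theorem in dimension $m$. So you have not reduced the theorem to something weaker; you have reformulated it as a stronger two-family statement whose inductive step you describe only as ``reduces, after tracking how the nested chains interlock, to a multi-family strengthening of $(\ast)_{t-1}$.'' That multi-family statement is never formulated, let alone proved, and your own closing paragraph concedes that finding an induction hypothesis in which a large $\aaa$ ``suppresses'' $\bb$ is ``the real difficulty.'' There is also a quantitative obstruction you should confront: after peeling a coordinate from the pair $(\aaa,\bb)$ you must bound $|\aaa'_1|+(s-1)|\aaa'_2|+(s-1)\bigl(|\bb'_1|+(s-1)|\bb'_2|\bigr)$ by $s^{m-t+1}$, yet a single application of $(\ast)_{t-1}$ in dimension $m-1$ already yields the full budget $s^{m-t+1}$ for just one of these pairs, so the four terms cannot be handled independently --- some genuinely new interaction lemma is required, and this is exactly where the hypothesis $s>t$ must enter in an essential way (the statement is false for $s\le t$). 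As it stands the proposal is an honest and sensible plan, not a proof.
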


Note that both \eqref{eq1} and \eqref{eq2} are easily seen to be best possible.
Even that the case $s=2,t=1$ of Theorem~\ref{thmold2} is trivial it is closely related to a classical result in extremal set theory.
\begin{thm}\label{thmold3} Suppose that $\G\subset 2^X$ satisfies $G\cap G'\ne \emptyset$ and $G\cup G'\ne X$ for all $G,G'\in \G$. Then
\begin{equation}\label{eq3}
  |\G|\le 2^{|X|-2}.
\end{equation}
\end{thm}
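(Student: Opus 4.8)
The plan is to trap $\G$ between a small up-set and a small down-set and then apply a correlation inequality. Concretely, I would use the following special case of Kleitman's correlation inequality (a consequence of the FKG inequality, or provable directly by induction on $|X|$ by splitting both families according to whether they contain a fixed element of $X$): for any up-set $\aaa\subseteq 2^X$ and any down-set $\bb\subseteq 2^X$,
\begin{equation}\label{eqkleit}
  2^{|X|}\,\bigl|\{C:\ C\in\aaa\text{ and }C\in\bb\}\bigr|\ \le\ |\aaa|\cdot|\bb| .
\end{equation}
This is exactly the statement that an increasing and a decreasing event are negatively correlated under the uniform measure on $2^X$.

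Set $n=|X|$; one may assume $n\ge 2$, since for $n\le 1$ no set $G$ can satisfy $G\cap G\neq\emptyset$ and $G\cup G\neq X$, so $\G=\emptyset$ and \eqref{eq3} is trivial. Let $\aaa=\{A\subseteq X:\ G\subseteq A\text{ for some }G\in\G\}$ be the up-closure of $\G$ and $\bb=\{B\subseteq X:\ B\subseteq G\text{ for some }G\in\G\}$ its down-closure, so that $\G\subseteq\aaa\cap\bb$. The first step is to check that $\aaa$ inherits the property $A\cap A'\neq\emptyset$ for all $A,A'\in\aaa$: choosing $G\subseteq A$ and $G'\subseteq A'$ in $\G$ gives $A\cap A'\supseteq G\cap G'\neq\emptyset$. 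Consequently $\aaa$ contains at most one member of each of the $2^{n-1}$ complementary pairs $\{C,X\setminus C\}$, so $|\aaa|\le 2^{n-1}$. Symmetrically, $\bb$ inherits $B\cup B'\neq X$ for all $B,B'\in\bb$: if $B\cup B'=X$, then picking $G\supseteq B$ and $G'\supseteq B'$ in $\G$ would force $G\cup G'=X$, contradicting the hypothesis on $\G$. Hence $\bb$ too contains at most one member of each complementary pair, and $|\bb|\le 2^{n-1}$.

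With these two size bounds in hand, \eqref{eq3} follows at once from \eqref{eqkleit}:
\begin{equation*}
  |\G|\ \le\ \bigl|\{C:\ C\in\aaa\text{ and }C\in\bb\}\bigr|\ \le\ \frac{|\aaa|\cdot|\bb|}{2^{n}}\ \le\ 2^{n-2}.
\end{equation*}
As a sanity check, the extremal family $\{G\subseteq X:\ a\in G,\ b\notin G\}$ (for fixed $a\ne b$) has up-closure $\{A:\ a\in A\}$ and down-closure $\{B:\ b\notin B\}$, both of size $2^{n-1}$, and turns \eqref{eqkleit} into an equality, so the argument is tight.

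The only genuine content here is the correlation inequality \eqref{eqkleit}; the two closure observations are routine. The main (and essentially the only) obstacle is therefore to pin down the correct form of \eqref{eqkleit} and to justify it — either by citing Kleitman's lemma, by invoking the FKG inequality, or by deriving it from the paper's own correlation-inequality machinery — after which the deduction above is immediate.
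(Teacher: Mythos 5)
Your proof is correct. The paper itself states Theorem~\ref{thmold3} without proof (citing \cite{DL}, \cite{S}), but your argument is precisely the $s=2$ specialization of the paper's own machinery: your up- and down-closures are the families $\ff(\{1\})$ and $\ff(\{2\})$ from the Lemma establishing \eqref{eq6}, your complementary-pair bounds are the statements $p(n,2,(1,0))=p(n,2,(0,1))=\tfrac12$, and the correlation inequality you invoke is exactly the case $s=2$, $P=\{1\}$, $Q=\{2\}$ of Theorem~\ref{thm2}, which the paper proves by induction on $n$. Thus every ingredient you leave as a citation is supplied elsewhere in the paper, and your deduction from them is complete and tight, as your extremal example shows.
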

This important result  was proved nearly simultaneously by several sets of authors (\cite{DL}, \cite{S},\ldots ). The authors of the present note recently gave an alternative proof of Theorem~\ref{thmold3} along with some generalizations.
Theorem \ref{thmold3} can be restated in the terminology of integer sequences as follows.

Suppose that $\ff\subset 2^{[n]}$ and for all $\vec y,\vec z\in \ff$ there exist $i,j\in [n]$ so that $y_i=z_i=1$ and $y_j=z_j=2$. Then $|\ff|\le 2^{n-2}$. Let us generalize this situation by introducing the notion of {\it $(t_1,t_2,\ldots, t_s)$-intersecting} property.
\begin{defn}
  Let $t_1,\ldots, t_s$ be non-negative integers and $\ff\subset [s]^n$. then $\ff$ is called $(t_1,\ldots, t_s)$-intersecting if for all $\vec y,\vec z\in \ff$, $\vec y\wedge \vec z$ has at least $t_\ell$ coordinates equal to $\ell$ for all $1\le \ell \le s$.
\end{defn}

Note that $\ff\ne \emptyset$ implies $t_1+\ldots t_s\le n$ and also that any $(t_1,\ldots, t_s)$-intersecting family is $(t_1+\ldots t_s)$-intersecting.

The simplest version of the main result of the present paper is the following

\begin{thm}\label{thmmain}
  Let $s\ge 2$ and suppose that $\ff\subset [s]^n$ is $(t_1,\ldots,t_s)$-intersecting, $n\ge t_1+\ldots+t_s$, $0\le t_i<s$ for each $1\le i\le s$. Then
  \begin{equation}\label{eq4}
  |\ff|\le s^{n-\sum_{i\in[s]}t_i}.
  \end{equation}
\end{thm}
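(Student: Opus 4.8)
The plan is to reduce Theorem~\ref{thmmain} to the already-known Theorem~\ref{thmold2} by an averaging/shifting argument that "spends" the intersection requirements on different letters independently. Observe first that the bound $s^{n-\sum t_i}$ is what one gets from $\sum t_i$ coordinates being pinned down, exactly as in \eqref{eq2}, so morally the $(t_1,\ldots,t_s)$-intersecting condition should be no stronger than being $(\sum t_i)$-intersecting for the purpose of the extremal bound — even though a priori it could be stronger. The key point to exploit is that $t_\ell<s$ for every $\ell$: this is precisely the regime where Theorem~\ref{thmold2} (with $t=\sum t_i$, which satisfies $s>t$ only when $s>\sum t_i$, so we cannot apply it directly) fails to apply, and the multi-coordinate structure is what saves us.

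Concretely, I would proceed by induction on $\sum_{i} t_i$, peeling off one unit of one $t_\ell$ at a time. Suppose $t_\ell\ge 1$ for some $\ell$; by relabelling assume $\ell=s$. For a coordinate $j\in[n]$ and a value $a\in[s]$, let $\ff_{j\to a}=\{\vec y\in\ff: y_j=a\}$ and, after deleting coordinate $j$, regard it as a family in $[s]^{n-1}$. I would like to find a coordinate $j$ such that $\ff_{j\to s}$ is still "large" and, crucially, $(t_1,\ldots,t_{s-1},t_s-1)$-intersecting after dropping coordinate $j$: two codewords of $\ff$ agreeing in value $s$ at coordinate $j$ need only $t_s-1$ further common $s$-coordinates, while the requirements on the other letters are untouched (we only removed a coordinate carrying value $s$). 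If we can guarantee such a $j$ with $|\ff_{j\to s}|\ge |\ff|/s$, then by induction $|\ff_{j\to s}|\le s^{(n-1)-(\sum t_i-1)}=s^{n-\sum t_i}$, giving $|\ff|\le s\cdot s^{n-\sum t_i}$, which is off by a factor $s$ — so a naive single-coordinate split is not enough. The fix is to split on a coordinate \emph{simultaneously for all $s$ values}: write $\ff=\bigcup_{a\in[s]}\ff_{j\to a}$, and argue that for a well-chosen $j$, \emph{every} part $\ff_{j\to a}$ (viewed in $[s]^{n-1}$) is $(t_1,\ldots,t_s)$-intersecting with the appropriate coordinate budget — here one must be careful, because only the part $\ff_{j\to s}$ gets its $t_s$ decreased, while for $a\ne s$ the part $\ff_{j\to a}$ keeps all of $t_1,\ldots,t_s$ but has one fewer coordinate, which is fine precisely because $t_s<s$ leaves room, i.e. $n-1\ge \sum t_i$ still. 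Summing the inductive bounds $|\ff_{j\to a}|\le s^{(n-1)-\sum_{i}t_i}$ for the $s-1$ values $a\ne s$ and $|\ff_{j\to s}|\le s^{(n-1)-(\sum_i t_i -1)}=s^{n-\sum t_i}$ would again overshoot, so the real argument must instead choose $j$ so that $\ff_{j\to s}$ is \emph{empty or negligible}, or use a global correlation inequality (as the abstract promises) rather than coordinate-by-coordinate induction.

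Given the abstract's emphasis on a correlation inequality, I expect the actual proof to go through a weight/probability space on $[s]^n$: put the uniform measure, and for each letter $\ell$ consider the "down-set–like" event $A_\ell(\vec y,\vec z)$ that $\vec y$ and $\vec z$ share fewer than $t_\ell$ coordinates with value $\ell$. One would want these bad events, or their complements, to be positively correlated across $\ell$, so that controlling each letter separately (via the $s=2$-type or single-letter estimate, which is essentially Theorem~\ref{thmold2} with $s$ vs.\ $t_\ell$, valid since $t_\ell<s$) multiplies up to the bound $\prod_\ell s^{-t_i}=s^{-\sum t_i}$ on the density of $\ff$. So the structure I anticipate is: (i) a lemma that for a single letter $\ell$, a family in $[s]^n$ in which every pair shares $\ge t_\ell$ coordinates of value $\ell$ has density $\le s^{-t_\ell}$ (this is Theorem~\ref{thmold2} in disguise, or a direct shifting proof); (ii) a correlation inequality showing that the intersection over $\ell$ of $s$ such "large" families cannot have density exceeding the product of the individual density bounds; (iii) combine.

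The main obstacle will be step (ii), the correlation inequality itself. The events "$\vec y\wedge\vec z$ has $\ge t_\ell$ coordinates equal to $\ell$" live on pairs, not on single codewords, so this is not a plain Harris/FKG statement on a product poset; one needs the right partial order on $[s]^n$ (likely the coordinatewise order induced by singling out each letter $\ell$ in turn, or a clever encoding of each codeword as $s$ indicator vectors) under which being "$(t_\ell)$-intersecting in letter $\ell$" becomes an up-set condition, and then show the relevant measure is log-supermodular or that the families are cross-correlated in the needed direction. Getting a partial order that makes \emph{all $s$ letters simultaneously} monotone — given that increasing the count of letter $\ell$ at a coordinate necessarily decreases the count of the letter currently there — is exactly the delicate point, and is presumably why the result is only "almost complete" and why the $t_\ell<s$ hypothesis (which prevents any single letter from monopolising a coordinate) is essential.
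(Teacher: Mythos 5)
Your high-level plan in the second half of the proposal is essentially the paper's strategy: bound the density of a family that is intersecting in a single letter $\ell$ by $s^{-t_\ell}$ (this is Theorem~\ref{thmold2}, applicable since $t_\ell<s$), and then show that the conditions for different letters combine multiplicatively via a negative-correlation inequality. You also correctly diagnose that the naive coordinate-peeling induction in your second paragraph overshoots by a factor of $s$ and must be abandoned. However, the proposal stops exactly where the work begins: you declare step (ii), the correlation inequality, to be ``the main obstacle'' and leave it unresolved, so what you have is a (largely accurate) prediction of the proof's architecture rather than a proof.

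The missing ingredients are concrete and nontrivial. For a proper subset $P\subset[s]$ one defines $\vec x<_P\vec y$ to mean that any coordinate of $\vec x$ whose entry lies \emph{outside} $P$ may be changed arbitrarily, while entries in $P$ are frozen, and one replaces $\ff$ by its $<_P$-upward closure $\ff(P)$. Two facts then need proof: (a) $\ff(P)$ is still $(0,\ldots,0,t_i,0,\ldots,0)$-intersecting for each $i\in P$, because coordinates carrying a value in $P$ are never altered by the closure; and (b) if $P\cap Q=\emptyset$, then for a $P$-complete family $\ff$ and a $Q$-complete family $\G$ one has $|\ff|\,|\G|\ge s^n|\ff\cap\G|$, which applied to $\ff(P)\supset\ff$ and $\ff(Q)\supset\ff$ gives the multiplicativity you want. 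Fact (b) is proved by induction on $n$: writing $f_i,g_i$ for the sizes of the slices by the last coordinate, disjointness of $P$ and $Q$ forces $(f_i-f)(g_i-g)=0$ for every $i$ (where $f,g$ are the generic slice sizes), and the cross term in $|\ff|\,|\G|$ is a product of two non-negative sums. Note also that your worry about finding one partial order monotone in all $s$ letters simultaneously is a red herring: the paper only ever compares two \emph{disjoint} blocks $P=[r]$ and $Q=[r+1,s]$ at a time and iterates, peeling off one letter per application. Finally, the hypothesis $t_\ell<s$ enters only where Theorem~\ref{thmold2} is invoked for the single-letter families, not in the correlation step. Without statements and proofs of (a) and (b), the argument is incomplete.
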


\section{A correlation inequality for integer sequences}

Let $\ff\subset [s]^n$ and $P$ a proper subset of $[s]$. Let us introduce the relationship $<_P$ by setting $(x_1,\ldots, x_n)<_P (y_1,\ldots, y_n)$ iff for each $1\le i\le n$ either $x_i=y_i$ or $x_i\notin P.$ Note that for $y,z\notin P$, both $(y)<_P (x)$ and $(x)<_P (y)$ hold! In human language, to make a `larger' vector in $<_P$ we can change any coordinates with entries not in $P$ to arbitrary entries.
\begin{defn}
  The family $\ff\subset [s]^n$ is called {\it $P$-complete} if $\vec x<_P\vec y$ and $\vec x\in \ff$ always imply $\vec y\in \ff$.
\end{defn}

\begin{thm}\label{thm2}
  Supposet that $P,Q \subset [s]$ are non-empty and disjoint. Consider two families $\ff,\G\subset [s]^n$ such that $\ff$ is  $P$-complete and $\G$ is a $Q$-complete. Then
  \begin{equation}\label{eq5}
    |\ff||\G|\ge s^n|\ff\cap \G|.
  \end{equation}
\end{thm}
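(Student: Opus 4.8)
The plan is to prove \eqref{eq5} by induction on $n$, peeling off the last coordinate. The base case $n=1$ is immediate: for a one-coordinate family, $P$-completeness forces $\ff=[s]$ or $\ff\subseteq P$, and likewise $\G=[s]$ or $\G\subseteq Q$; if one of the families equals $[s]$ then \eqref{eq5} holds with equality, and otherwise $\ff\cap\G\subseteq P\cap Q=\emptyset$, so the right-hand side is $0$.

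For the inductive step, write $N=s^{n-1}$, and for each $a\in[s]$ set $\ff_a=\{\vec x\in[s]^{n-1}\colon(\vec x,a)\in\ff\}$ and $f_a=|\ff_a|$, and define $\G_a,g_a$ analogously. It is immediate from the definition of $<_P$ that each $\ff_a$ is $P$-complete and each $\G_a$ is $Q$-complete inside $[s]^{n-1}$, so the induction hypothesis gives $f_ag_a\ge N|\ff_a\cap\G_a|$ for all $a$. Summing over $a$ and using $|\ff\cap\G|=\sum_a|\ff_a\cap\G_a|$, we get $s^n|\ff\cap\G|\le s\sum_a f_ag_a$, so it suffices to prove the numerical inequality
\[
\Big(\sum_{a\in[s]}f_a\Big)\Big(\sum_{a\in[s]}g_a\Big)\ \ge\ s\sum_{a\in[s]}f_ag_a .
\]

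Here the hypotheses on $P$ and $Q$ enter. If $a\notin P$, then in any $(\vec x,a)\in\ff$ the last coordinate may be changed to anything, so $\ff_a=\ff_b$ for all $a,b\notin P$ and $\ff_a\subseteq\ff_c$ whenever $a\notin P$, $c\in P$; hence $f$ is constant, say equal to $m_f$, on $[s]\setminus P$, and $f_c\ge m_f$ for $c\in P$. Symmetrically $g$ equals a constant $m_g$ on $[s]\setminus Q$ and $g_c\ge m_g$ for $c\in Q$. Since $P$ and $Q$ are disjoint, on each $a\in P$ we have $g_a=m_g$, on each $a\in Q$ we have $f_a=m_f$, and on $R:=[s]\setminus(P\cup Q)$ both $f_a=m_f$ and $g_a=m_g$. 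Writing $F=\sum_{a\in P}f_a$, $G=\sum_{a\in Q}g_a$, $p=|P|$, $q=|Q|$, a short expansion, using the identity $(s-p)(s-q)-s(s-p-q)=pq$, shows that
\[
\Big(\sum_{a}f_a\Big)\Big(\sum_{a}g_a\Big)-s\sum_{a}f_ag_a=(F-pm_f)(G-qm_g),
\]
which is $\ge 0$ because $F\ge pm_f$ and $G\ge qm_g$. This closes the induction.

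The one genuinely non-formal point is this last display: Chebyshev's sum inequality gives the \emph{opposite} bound $\sum_a f_ag_a\ge\frac1s\big(\sum_af_a\big)\big(\sum_ag_a\big)$ for similarly ordered sequences, so the inequality we need can only come from an anti-correlation between $(f_a)$ and $(g_a)$. The disjointness of $P$ and $Q$ supplies exactly this: it guarantees that on every coordinate $a$ at least one of $f_a,g_a$ sits at its minimal value, which is what forces the cross term $\sum_a f_ag_a$ to be small enough. I expect this structural observation — rather than any calculation — to be the crux of the argument.
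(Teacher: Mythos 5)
Your proof is correct and follows essentially the same route as the paper's: induction on $n$ via slicing on the last coordinate, using $P$-completeness to make $f_a$ constant off $P$ (and likewise for $g_a$), and reducing to the identity $\bigl(\sum_a f_a\bigr)\bigl(\sum_a g_a\bigr)-s\sum_a f_ag_a=\bigl(\sum_{a\in P}(f_a-m_f)\bigr)\bigl(\sum_{a\in Q}(g_a-m_g)\bigr)\ge 0$, which is exactly the paper's final computation. No gaps.
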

Note that \eqref{eq5} is equivalent to
$$\frac{|\ff\cap\G|}{s^n}\le \frac{|\ff|}{s^n}\cdot \frac{|\G|}{s^n}. $$
That is, \eqref{eq5} shows that the probabilistic events $\vec x\in \ff, \vec x\in \G$ are negatively correlated w.r.t. the uniform measure on $[s]^n$.

The case $[s]=\{1,2\}$, $P=\{1\}$, $Q=\{2\}$ of \eqref{eq5} is equivalent to the Kleitman--Harris correlation inequality (cf. \cite{Kl}, \cite{H}) for up-sets and down-sets.

\begin{proof}
  The proof is by induction on $n$. Consider the  base case, $n=1$, first. Then $\ff,\G$ are subsets of $[s]$. If $\ff\cap \G=\emptyset$ then \eqref{eq5} holds. Assume that $x\in \ff\cap \G$. If $x\notin P\cup Q$ then $\ff=\G=[s]$ and therefore \eqref{eq5} follows. Say, $\ff\neq [s]$. Then $\ff\subset P$. Using $x\in \ff\cap \G$ and $P\cap Q=\emptyset$, $\G = [s]$ follows. Thus $|\ff| = |\ff\cap \G| = |\ff||\G|/s$, proving \eqref{eq5}.

  Next, assume that \eqref{eq5}  holds for $n$ and let us prove it for $n+1$. Take two families $\ff,\G\in[s]^{n+1}$  and consider the families $\ff_i$, $\G_i$, $i\in [s]$, where
  $$\ff_i = \{(x_1,\ldots, x_n)\in [s]^n: (x_1,\ldots, x_n,i)\in \ff\}$$
  and $\G_i$ is defined analogously. Set also $f_i = |\ff_i|$ (and similarly $g_i = |\G_i|$). Note the relations $f_i\le f_j$ if $i\ne P$ (implying $f_i=f_j$ if $i,j\notin P$) and $g_i\le g_j$ if $i\notin Q$.

  Let $f$ ($g$) be the common value of $f_i$ ($g_i$) for $i\notin P$ ($i\notin Q$), respectively. Note that
  $$|\ff|=f_1+\ldots +f_s= (s-|P|)f+\sum_{i\in P} (f_i-f),$$
  $$|\G|=g_1+\ldots +g_s= (s-|Q|)g+\sum_{i\in Q} (g_i-g),$$
  and also $(f_i-f)(g_i-g)=0$ by $P\cap Q=\emptyset$ for all $i$.
  By the induction hypothesis, we have
  $$s^{n+1}|\ff\cap \G|= s^{n+1}\sum_{i\in [s]}|\ff_i\cap \G_i|\le s \sum_{i\in [s]}f_ig_i=s^2fg +sf\sum_{i\in Q}(g_i-g)+sg\sum_{i\in P}(f_i-f).$$
On the other hand,
  \begin{align*}|\ff||\G| =& (\sum_{i\in [s]}f_i)(\sum_{i\in [s]}g_i)=(sf+\sum_{i\in P}(f_i-f))(sg+\sum_{i\in Q}(g_i-g))\\ =& s^2fg +sf\sum_{i\in Q}(g_i-g)+sg\sum_{i\in P}(f_i-f)+(\sum_{i\in P}(f_i-f))(\sum_{i\in Q}(g_i-g)).\end{align*}
  Since the last term is non-negative, the proof of \eqref{eq5} is complete.
  \end{proof}

\section{The proof of Theorem~\ref{thmmain}}
For $\vec t = (t_1,\ldots, t_s)$ let us define
$$p(n,s,\vec t) = \max\big\{|\ff|/s^n: \ff\subset [s]^n \text{ is } \vec t\text{-intersecting}\big\}.$$
\begin{lem}
  For $1\le r<s$,
  \begin{equation}\label{eq6}
    p(n,s,\vec t)\le p(n,s,(t_1,\ldots, t_r,0,0,\ldots, 0)p(n,s,0,\ldots, 0,t_{r+1},\ldots,t_s).
  \end{equation}
\end{lem}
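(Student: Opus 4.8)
The plan is to sandwich the $\vec t$-intersecting family between two families to which Theorem~\ref{thm2} applies, one ``living off'' the first $r$ letters and one off the last $s-r$. Write $\vec t'=(t_1,\ldots,t_r,0,\ldots,0)$ and $\vec t''=(0,\ldots,0,t_{r+1},\ldots,t_s)$, and put $P=\{1,\ldots,r\}$, $Q=\{r+1,\ldots,s\}$; since $1\le r<s$ these are non-empty, proper and disjoint. Fix a $\vec t$-intersecting $\ff\subset[s]^n$ with $|\ff|/s^n=p(n,s,\vec t)$ (the maximum is attained because there are only finitely many families $\ff\subset[s]^n$). Let
$$\aaa=\{\vec z\in[s]^n:\ \vec x<_P\vec z\ \text{for some}\ \vec x\in\ff\},$$
and let $\bb$ be defined the same way with $Q$ in place of $P$. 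The relation $<_P$ is reflexive and transitive (both are immediate from the definition), so $\aaa$ is $P$-complete and contains $\ff$; likewise $\bb$ is $Q$-complete and contains $\ff$. In particular $\ff\subset\aaa\cap\bb$.

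Next I would check that $\aaa$ is $\vec t'$-intersecting. Take $\vec z,\vec z'\in\aaa$, witnessed by $\vec x,\vec x'\in\ff$ with $\vec x<_P\vec z$ and $\vec x'<_P\vec z'$. If $\ell\le r$ then $\ell\in P$, so at every coordinate $i$ with $x_i=x_i'=\ell$ the relation $\vec x<_P\vec z$ forces $z_i=x_i=\ell$, and similarly $z_i'=\ell$, hence $(\vec z\wedge\vec z')_i=\ell$. Thus for each $\ell\le r$ the vector $\vec z\wedge\vec z'$ has at least as many coordinates equal to $\ell$ as $\vec x\wedge\vec x'$ does, and that number is $\ge t_\ell$ because $\ff$ is $\vec t$-intersecting; this is precisely the $\vec t'$-intersecting condition. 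The same argument, with $Q$ and the letters $r+1,\ldots,s$ in place of $P$ and $1,\ldots,r$, shows $\bb$ is $\vec t''$-intersecting.

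To conclude, apply the correlation inequality \eqref{eq5} to the $P$-complete family $\aaa$ and the $Q$-complete family $\bb$:
$$|\ff|\le|\aaa\cap\bb|\le\frac{|\aaa|\,|\bb|}{s^n}.$$
Dividing by $s^n$ and using $|\aaa|/s^n\le p(n,s,\vec t')$ and $|\bb|/s^n\le p(n,s,\vec t'')$, which hold by the previous paragraph together with the definition of $p$, we obtain
$$p(n,s,\vec t)=\frac{|\ff|}{s^n}\le p(n,s,\vec t')\,p(n,s,\vec t''),$$
which is \eqref{eq6}.

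The only delicate point is the verification in the second paragraph that passing to the $P$-completion does not spoil the part of the $\vec t$-intersecting condition governed by the letters of $P$. The reason it works is that $<_P$ overwrites only coordinates whose current letter lies outside $P$, so the coordinates certifying the constraints $t_1,\ldots,t_r$ are left untouched; once this is granted, the rest is bookkeeping plus a single invocation of Theorem~\ref{thm2}.
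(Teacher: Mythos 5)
Your proof is correct and follows essentially the same route as the paper: form the $P$- and $Q$-completions of an extremal $\vec t$-intersecting family for $P=[r]$, $Q=\{r+1,\ldots,s\}$, note that completion preserves the intersection constraints for letters inside the completing set, and apply the correlation inequality \eqref{eq5} to the two completions. The only difference is that you spell out the verification that the $P$-completion remains $(t_1,\ldots,t_r,0,\ldots,0)$-intersecting in more detail than the paper does.
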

\begin{proof}
  Suppose that $\ff\subset [s]^n$ is $\vec t$-intersecting, $|\ff|$ is maximal. For $P\subset [s]$ let us define the $P$-complete family $\ff(P)$ by $$\ff(P) = \{\vec z\in [s]^n: \exists \vec y\in \ff, \vec y<_P \vec z.\}$$
  In human words $\ff(P)$ is obtained from $\ff$ by adding all possible sequences obtained from some sequence in $\ff$ by changing entries $y\notin P$ to arbitrary $y\in [s]$. The $P$-completeness of $\ff(P)$ should be clear.

  Since no entry $Y_\ell$ of $(y_1,\ldots, y_n)\in \ff$ satisfying $y_\ell\in P$ is ever changed for each $i\in P$, $\ff(P)$ is $(0,0,\ldots, t_i,\ldots,0)$-intersecting.

Consequently,
\begin{align*}|\ff([r])|/s^n&\le p\big(n,s,(t_1,\ldots, t_r,0,\ldots, 0)\big) \ \ \ \text{and}\\
|\ff([r+1,s])|/s^n&\le p\big(n,s,(0,\ldots,0,t_{r+1},\ldots, t_s)\big).
\end{align*}
  Since $\ff\subset \ff([r])\cap \ff([r+1,s])$ is obvious, \eqref{eq6} follows from \eqref{eq5}.
\end{proof}

Now the proof of Theorem~\ref{thmmain} is easy. In view of Theorem~\ref{thmold3}, $p(n,s,(0,\ldots,0,t_i,\ldots,0))=s^{-t_i}$ for each $1\le i\le s$ and $0\le t_i<s$. Applying repeatedly \eqref{eq6}, we obtain
$$p(n,s,(t_1,\ldots,t_s))\le s^{-t_1}p(n,s,(0,t_2,\ldots, t_s))\le s^{-t_1-t_2}p(n,s,(0,0,t_3,\ldots,t_s))\le \ldots\le s^{-t_1-\ldots -t_s}.$$

\section{The case $s=2$}
The results for $s=2$ that we mentioned in the introduction might appear meager. They concern the case $t=1$ only. There is a good reason for it.

\begin{cla}
  We have $\lim_{n\to \infty} p(n,2,(t_1,t_2))=\frac 14$ for any fixed positive integer vector $t_1,t_2$.
\end{cla}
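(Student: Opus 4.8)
The plan is to prove the claim by establishing matching lower and upper bounds. For the lower bound, I would exhibit, for each $n$, a $(t_1,t_2)$-intersecting family $\ff\subset\{1,2\}^n$ with $|\ff|/2^n$ tending to $1/4$. The natural candidate mimics the extremal construction for Theorem~\ref{thmold3}: fix two disjoint blocks of coordinates $A,B\subset[n]$ with $|A|,|B|\to\infty$ (say $|A|=|B|=\lfloor\sqrt n\rfloor$), and let $\ff$ consist of all $\vec y$ having at least $t_1$ coordinates equal to $1$ inside $A$ \emph{and} at least $t_2$ coordinates equal to $2$ inside $B$. Any two such vectors share at least $t_1$ positions carrying $1$ (all inside $A$, since each has a $1$ on the vast majority of $A$ — here I would use that the number of indices in $A$ carrying $1$ is binomially distributed and hence concentrated near $|A|/2\gg t_1$, so two such sets of $1$-positions must overlap in $\ge t_1$ places for $|A|$ large) and, symmetrically, at least $t_2$ shared $2$-positions inside $B$. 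The measure of $\ff$ is $\Pr[\text{Bin}(|A|,1/2)\ge t_1]\cdot\Pr[\text{Bin}(|B|,1/2)\ge t_2]\to 1$ once we restrict to $A\cup B$, but the remaining $n-|A|-|B|$ coordinates are free, so in fact $|\ff|/2^n\to 1$ — which is too big, so I would instead intersect with the Theorem~\ref{thmold3}-type family and reconsider.

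More carefully: the correct lower-bound construction is to take the product of the standard $2^{n-2}$-type construction with the ``local'' trick. Concretely, pick coordinates $a_1,a_2,b_1,b_2$ (four distinct indices) and set $\ff = \{\vec y : y_{a_1}=1 \text{ or } y_{a_2}=1\} \cap \{\vec y: y_{b_1}=2\text{ or } y_{b_2}=2\}$ — no, this still is not $(t_1,t_2)$-intersecting for $t_i\ge 2$. The honest route is: take $A$ of size $m\to\infty$, let $\ff_A$ be those vectors with $\ge m/2$ ones and also $\ge m/2$ twos restricted to $A$ (measure $\to 1/4$ by the central limit theorem, since $\text{Bin}(m,1/2)\approx m/2$), and observe that two such vectors share $\ge 2(m/2)-m = 0$ — again not enough. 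So the genuine construction must be: let $\ff$ be the vectors whose number of $1$'s among the first $m$ coordinates lies in $[m/2,\,m/2+\sqrt m]$ and whose number of $2$'s among the \emph{next} $m$ coordinates lies in $[m/2,\,m/2+\sqrt m]$; for $m$ large the pigeonhole/inclusion bound forces two members to share $\gg\sqrt m\ge t_i$ coordinates of each type, and the measure is $(\tfrac12+o(1))^2=\tfrac14+o(1)$. This is the step I expect to require the most care, namely calibrating the window width so that the measure approaches exactly $1/4$ (not more) while the guaranteed overlap still exceeds the fixed constants $t_1,t_2$.

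For the upper bound $\limsup_n p(n,2,(t_1,t_2))\le 1/4$, I would invoke Theorem~\ref{thm2} with $[s]=\{1,2\}$, $P=\{1\}$, $Q=\{2\}$, exactly as in the proof of Theorem~\ref{thmmain}: any $(t_1,t_2)$-intersecting $\ff$ of maximum size satisfies $\ff\subset \ff(\{1\})\cap\ff(\{2\})$, where $\ff(\{1\})$ is $(0,t_2)$-intersecting... wait — one must track which coordinate is preserved. Completing to a $\{1\}$-complete family preserves the $1$-coordinates, so $\ff(\{1\})$ is $(t_1,0)$-intersecting; a $(t_1,0)$-intersecting family in $\{1,2\}^n$ is just a family in which every two members share $\ge t_1$ coordinates equal to $1$, i.e.\ a $t_1$-intersecting family of subsets (identifying a vector with its set of $1$-positions) in the \emph{Katona} sense. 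By \eqref{eq5}, $p(n,2,(t_1,t_2))\le p_1(n,t_1)\,p_2(n,t_2)$ where $p_1(n,t_1)$ is the max density of a ``$\ge t_1$ common ones'' family and $p_2(n,t_2)$ the max density of a ``$\ge t_2$ common twos'' family; by symmetry $p_1(n,t_1)=p_1(n,1)\cdot$(something)? No — here I would simply use that a family with every pair sharing $\ge 1$ coordinate equal to $1$ has density $\le 1/2$ (its complement in the $1$-coordinate cannot contain the all-zero pattern together with... actually: such a family cannot contain both a vector with no $1$'s and — the clean statement is that the indicator up-set of ``has a $1$ in position $i$'' for a fixed $i$ is not forced, but an intersecting family of sets has measure $\le 1/2$ by the standard pairing $S\leftrightarrow S^c$). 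Hence $p_1(n,t_1),p_2(n,t_2)\le 1/2$ for all $t_i\ge 1$, giving $p(n,2,(t_1,t_2))\le 1/4$, and together with the construction the limit equals $1/4$. The main obstacle, as flagged, is the quantitative construction for the lower bound; the upper bound is essentially Theorem~\ref{thmmain}'s proof specialized to $s=2$ together with the trivial observation that a nonempty intersecting (in the ``shared letter'' sense) family has density at most $1/2$.
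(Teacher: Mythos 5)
Your upper bound is correct: a $(t_1,t_2)$-intersecting family with $t_1,t_2\ge 1$ is in particular intersecting in each letter, and the pairing $S\leftrightarrow S^c$ combined with the correlation inequality (or, as the paper does, a direct appeal to Theorem~\ref{thmold3}) gives $p(n,2,(t_1,t_2))\le 1/4$. The genuine gap is in the lower bound, which you yourself flag as the delicate step. The window construction you settle on --- number of $1$'s among the first $m$ coordinates lying in $[m/2,\,m/2+\sqrt m]$, and similarly for $2$'s on the next block --- fails on both counts. First, it is not $(t_1,t_2)$-intersecting: for two subsets of $[m]$ each of size at least $m/2$, inclusion--exclusion only guarantees overlap at least $2\cdot\frac m2-m=0$; e.g.\ for $m=4$ the vectors whose $1$-positions in the first block are exactly $\{1,2\}$ and $\{3,4\}$ both lie in your window yet share no $1$-position. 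An upper cutoff on the count does nothing for pairwise overlap; only the lower threshold matters. Second, the measure of the window is $\Pr[\mathrm{Bin}(m,1/2)\in[m/2,\,m/2+\sqrt m]]\to\Phi(2)-\Phi(0)\approx 0.477$ (the half-width $\sqrt m$ is only two standard deviations), so the density of your family tends to roughly $0.228<1/4$ even ignoring the intersection issue.

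The missing idea is that no upper cutoff is needed and the lower threshold should be shifted by the \emph{constant} $t_i/2$, not by $\sqrt m$: take disjoint $X_1,X_2$ with $|X_i|=n_i\to\infty$ and require at least $\frac{n_i+t_i}2$ coordinates equal to $i$ inside $X_i$. Then any two members share at least $2\cdot\frac{n_i+t_i}2-n_i=t_i$ coordinates equal to $i$, and since the shift $t_i/2$ is $o(\sqrt{n_i})$ the probability of each condition tends to $1/2$, giving density $(\frac12+o(1))^2=\frac14+o(1)$. This is exactly the family $\mathcal K(X_1,X_2,\vec t)$ in the paper. Your worry that the density might exceed $1/4$ (which prompted the cap) is unfounded: each one-sided threshold event has probability at most $1/2$ automatically, by the symmetry of the binomial about $n_i/2$.
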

\begin{proof}
  Since $p(n,2,(t_1,t_2)\le 2^{-2}$ follows from \eqref{eq3}, we only need to show the opposite inequality by construction.

  Let $X_1,X_2\subset[n]$ with $X_1\cap X_2\ne \emptyset$ and $|X_1|=n_1, |X_2|=n_2$. Define
  $$\mathcal K(X_1,X_2,\vec t)=\Big\{(y_1,\ldots, y_n)\in 2^{[n]}: \big|\{j: j\in X_i \text{ and }y_j=i\}\big|\ge \frac {n_i+t_i}2\text{ for } i=1,2\Big\}.$$
  For $t_i$ fixed and $n_i\to \infty$, $|\mathcal K(X_1,X_2,\vec t)|=(\frac 12+o(1))^22^n$ should be clear as well as the $(t_1,t_2)$-intersecting property.
\end{proof}

In the Ph.D. dissertation of the first author \cite{Fphd} the following conjecture was stated.
\begin{gypo} Suppose that $\vec t = (t_1,t_2)$ is a positive integer vector with $t_1+t_2\le n$. Assume that $\ff\subset 2^{[n]}$ is $\vec t$-intersecting. Then
\begin{equation}\label{eq7}
  |\ff|\le \max_{X_1,X_2}|\mathcal K(X_1,X_2,\vec t)|.
\end{equation}
\end{gypo}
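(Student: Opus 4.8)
The plan is to combine shifting with a joint induction on the parameters. First I would identify $\{1,2\}^n$ with $2^{[n]}$ by letting $Y\subseteq[n]$ be the set of coordinates carrying the letter~$1$. Then $\ff$ is $(t_1,t_2)$-intersecting exactly when $|Y\cap Z|\ge t_1$ and $|Y\cup Z|\le n-t_2$ for all $Y,Z\in\ff$; in particular $|Y\triangle Z|\le n-t_1-t_2$, so every $(t_1,t_2)$-intersecting family automatically has bounded diameter (but the extremal diameter-families, Hamming balls, are not $(t_1,t_2)$-intersecting, which is why the problem is not settled by Kleitman's theorem on families of bounded diameter). In shifted form the conjectured optimum $\mathcal K(X_1,X_2,\vec t)$ is obtained by taking $X_1=[n_1]$ and $X_2=\{n-n_2+1,\dots,n\}$ with $n_1+n_2\ge n+1$ (this last inequality being precisely the requirement $X_1\cap X_2\ne\emptyset$): it consists of the $Y$ with many $1$'s at the front of $[n]$ and few $1$'s at the back.

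\textbf{Stage 1 (shifting).} First I would compress $\ff$ by the usual left-shifts $S_{i\to j}$, $i<j$ (replace $Y$ by $Y-j+i$ whenever $j\in Y\not\ni i$ and $Y-j+i\notin\ff$). Frankl's shifting lemma shows each such operation keeps $\ff$ $t_1$-intersecting; applied to the complement family $\{[n]\setminus Y:Y\in\ff\}$ the same operation acts as the right-shift $S_{j\to i}$, and an entirely analogous case analysis shows it preserves the $t_2$-intersecting property of the complements. Hence we may assume that $\ff$ is simultaneously left-shifted and right-co-shifted, and, as checked above, the conjectured extremal families are themselves of this type, so nothing is lost.

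\textbf{Stage 2 (localization and optimization).} For a compressed $\ff$ I would try to show that the $t_1$-intersecting condition is ``controlled from the left'': there is an integer $n_1$ and a threshold $\alpha\ge\lceil(n_1+t_1)/2\rceil$ with $|Y\cap[n_1]|\ge\alpha$ for every $Y\in\ff$ --- the extremal configuration of Katona's $t$-intersecting theorem for $2^{[n]}$ read on the first $n_1$ coordinates --- and symmetrically that the $t_2$-condition forces $|Y\cap\{n-n_2+1,\dots,n\}|\le\beta\le\lfloor(n_2-t_2)/2\rfloor$. Granting such a localization, $\ff$ is contained in some $\mathcal K(X_1,X_2,\vec t')$ with windows $X_1=[n_1]$, $X_2=\{n-n_2+1,\dots,n\}$ and $\vec t'\ge\vec t$, and one is left with the purely computational task of maximizing $|\mathcal K|$ over the window sizes $(n_1,n_2)$ --- a binomial convolution in which the overlap size $n_1+n_2-n$ couples the two sides --- and of verifying that the maximum occurs with $n_1+n_2\ge n+1$.

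\textbf{Main obstacle.} The cheap version of Stage~2 --- feed $P=\{1\}$, $Q=\{2\}$ into the Lemma, i.e.\ use \eqref{eq6} and the correlation inequality \eqref{eq5}, and bound each factor by Katona's theorem --- only yields $p(n,2,\vec t)\le p(n,2,(t_1,0))\,p(n,2,(0,t_2))$, and since each one-sided maximum tends to $\tfrac12$ but stays strictly above it, this reproves only the asymptotic Claim, not the exact bound \eqref{eq7}. The genuine difficulty is that a single family cannot be simultaneously extremal for both one-sided problems on all of $[n]$: the two conditions really interact on the overlapping coordinates, so Stage~2 cannot be obtained by applying Katona twice. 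What seems to be needed instead is a joint induction that lowers $(n,t_1,t_2)$ together --- a ``down--up'' compression in the spirit of Kleitman's proof of the diameter theorem, passing from $(n,t_1,t_2)$ to $(n-1,t_1-1,t_2)$ or $(n-1,t_1,t_2)$ while bounding the loss so that equality holds exactly at $\mathcal K$. Devising this compression, and handling the parity and boundary cases (small $n$, $t_i$ near $s=2$, $t_1+t_2$ close to $n$), is where I expect the real work to lie; the final optimization over $(n_1,n_2)$ in \eqref{eq7} should be a secondary, routine obstacle by comparison.
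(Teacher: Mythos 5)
This statement is stated in the paper as a \emph{conjecture} (from the first author's 1977 Ph.D.\ thesis), not a theorem: the paper offers no proof, and immediately after the statement it records that only special cases are known ($\min\{t_1,t_2\}=1$ by \cite{F75}; $q=n-t_1-t_2\le 3$ by \cite{BSW}; $q=4$ and $n>(q+1)^3$ by \cite{FW}). Your proposal does not close this gap, and by your own admission it is a plan rather than a proof: the entire content of the conjecture is concentrated in your Stage~2, which you introduce with ``I would try to show'' and never establish, and your closing paragraph concedes that the key compression/induction has not been devised. Stage~1 (left-shifting together with right-co-shifting) is standard and harmless, and your diagnosis of why the correlation-inequality route fails is accurate --- \eqref{eq6} with $P=\{1\}$, $Q=\{2\}$ plus Katona gives only $p(n,2,\vec t)\le p(n,2,(t_1,0))\,p(n,2,(0,t_2))$, which strictly exceeds the conjectured value for finite $n$ --- but correctly identifying the obstacle is not the same as overcoming it.

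Beyond being incomplete, the localization claim of Stage~2 is also too strong as stated. Even in the one-sided case $t_2=0$, a shifted $t_1$-intersecting family need not be \emph{contained} in a single threshold family $\{Y:|Y\cap[n_1]|\ge\alpha\}$: Katona's theorem bounds the size, but the structure of near-extremal shifted families (cf.\ the families $\ff_{t,r}$ appearing in Filmus's theorem \cite{Fil}, which vary with $r$) shows that no single window $[n_1]$ with a single threshold captures all of them. So even granting shiftedness, ``$\ff\subset\mathcal K(X_1,X_2,\vec t')$ for some windows'' is not a reduction one can expect to prove directly; what is needed is a size inequality, which is precisely the open problem. In short: there is a genuine gap, namely the whole of Stage~2, and the statement remains a conjecture.
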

Let us mention that the maximum of the RHS is achieved for $X_1,X_2$ satisfying $|X_i|\equiv t_i\mod 2$ and $|X_1|+|X_2| = n$ or $n-1$. The case $\min\{t_1,t_2\}=1$ was proved in \cite{F75}.

Set $q=n-t_1-t_2$. For the cases $q=0$ or $1$, $\max|\ff|=2^q$ is easy to verify. Bang, Sharp and Winkler \cite{BSW} proved \eqref{eq7} for $q=2$ or $3$. Recently, Wang and the first author \cite{FW} settled the case $n>n_0(q)$ by proving \eqref{eq7} for $n>(q+1)^3$ and also for $q=4$.

Let us also mention that the cases $(t,0)$ and $(0,t)$ are equivalent to the classical Katona Theorem \cite{Ka}.

\section{The general case}
Suppose that $s\ge 3$. There is a striking difference to the case $s=2$. Let us explain it by considering the $(t,0,\ldots,0)$ case.

Suppose that $X\subset [n]$, $|X|\equiv t \mod 2$. The construction
$$\mathcal L(X) =\Big\{(y_1,\ldots, y_n)\in [s]^n: |\{i\in X: y_i=1\}|\ge \frac{|X|+t}2\Big\}$$
is clearly $(t,0,\ldots, 0)$-intersecting. However, the overwhelming majority of sequences $\vec y\in [s]^n$ has around $n/s$ coordinates equal to $1$. Hence for $|X|>(\frac 1s+\epsilon)n$ with constant $\epsilon>0$, $|\mathcal L(X)|/s^n$ tends to $0$ as $n\to \infty$. In fact, this ratio is at most $e^{-2\epsilon^2 n/s^2}$ using standard Chernoff-type concentration inequalities (see, e.g., \cite[Theorem A.1.4]{AS}).

Let us define the $p$-biased measure $\mu_p$ on $2^{[n]}$ by setting $\mu_p(F) = p^{|F|}(1-p)^{n-|F|}$ for all $F\in [n]$. Filmus \cite{Fil} proved a Complete $t$-intersection Theorem for families in $2^{[n]}$ with a $p$-biased measure. In order to state it, we need some definitions. Put $r^*$ to be the largest integer satisfying $r^*\le (n-t)/2$. For each $r\le r^*$ define
$$\ff_{t,r} = \{A\subset [n]: |A\cap [t+2r]|\ge t+r\}.$$
Also, define
\begin{equation}\label{eq8}
  w(n,t,p) = \begin{cases}
               \mu_p(\ff_{t,r}), & \mbox{if } \frac r{t+2r-1}\le p\le  \frac{r+1}{t+2r+1} \text{ for some }r<r^*,\\
               \mu_p(\ff(t,r^*)), & \mbox{if } \frac{r^*}{t+2r^*-1}\le p.
             \end{cases}
\end{equation}
Then the theorem of Filmus states
\begin{thm}\label{thmold4}
If $\ff\subset 2^{[n]}$ is $t$-intersecting with $t\ge 2$ then $\mu_p(\ff)\le w(n,p,t)$. 
\end{thm}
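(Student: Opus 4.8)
The plan is to rerun the Ahlswede--Khachatrian proof of the classical complete intersection theorem, replacing the uniform measure on $\binom{[n]}{k}$ throughout by the biased measure $\mu_p$ on $2^{[n]}$. Two harmless normalizations come first. Enlarging a family only increases its $\mu_p$-measure, so we may replace $\ff$ by any maximal $t$-intersecting family containing it; and a maximal $t$-intersecting family is automatically an upset, since if $B$ belongs to it and $C\supseteq B$ then $C$ still $t$-intersects every member and so belongs to it too. Next, left-compression preserves being an upset, preserves the $t$-intersecting property, and preserves every layer size $|\ff\cap\binom{[n]}{k}|$, hence $\mu_p(\ff)$ exactly; iterating, we may assume in addition that $\ff$ is shifted. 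So from now on $\ff$ is a shifted $t$-intersecting upset.

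Now split on the largest element: set $\ff_0=\{B\subseteq[n-1]:B\in\ff\}$ and $\ff_1=\{B\subseteq[n-1]:B\cup\{n\}\in\ff\}$, so that $\mu_p(\ff)=(1-p)\,\mu_p(\ff_0)+p\,\mu_p(\ff_1)$ with the measures on the right taken on $2^{[n-1]}$. Both $\ff_0,\ff_1$ are upsets, $\ff_0\subseteq\ff_1$, and one checks directly that $\ff_0$ is $t$-intersecting, $\ff_1$ is $(t-1)$-intersecting, and $(\ff_0,\ff_1)$ is cross-$t$-intersecting. Call such a configuration a \emph{type-$t$ admissible pair}, and for $s\ge1$, $m\ge0$ let $W_s(m)$ be the maximum of $(1-p)\,\mu_p(\aaa)+p\,\mu_p(\bb)$ over all type-$s$ admissible pairs of upsets in $2^{[m]}$; then $\mu_p(\ff)\le W_t(n-1)$. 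One splits a type-$s$ admissible pair on the top element of its ground set, checks that the two pieces are again admissible pairs --- now of types $s$ and $s-1$ and at level $m-1$, and linked to each other by further cross- and inclusion-relations, which is where the shiftedness of $\ff$ (inherited by the sub-pairs) is used --- and proves by induction on $m$ a sharp formula for $W_s(m)$: it is attained by a pair assembled from one or two Frankl families $\ff_{\sigma,r}=\{A:|A\cap[\sigma+2r]|\ge\sigma+r\}$, the precise pair and its parameters depending on $p$ and $m$. For $s=t$, $m=n-1$ this is exactly the right-hand side of the theorem: $W_t(n-1)=w(n,t,p)$.

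The hard part is making this induction precise enough to propagate. The optimal pair for parameter $s$ at level $m$ has to split into the optimal pairs for $s$ and $s-1$ at level $m-1$, which forces one to track which Frankl family is in play; that depends on which interval of \eqref{eq8} contains $p$, so the proof breaks into cases according to that interval, and the inductive statement has to be strengthened to record the relative positions of the sub-pairs (not just a bound on $W_s(m)$). The endpoints $p=r/(t+2r-1)$, where $\mu_p(\ff_{t,r-1})=\mu_p(\ff_{t,r})$ --- a short binomial identity, which is also why $w$ is continuous and piecewise of the stated shape --- and the terminal interval $r^*/(t+2r^*-1)\le p$ with $r^*=\lfloor(n-t)/2\rfloor$, where the chain of Frankl families runs out and the bound has to be closed off directly (bottoming out at $n=t$, where $w=p^t$ trivially), both need separate attention. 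This case analysis is a $\mu_p$-weighted rerun of the notoriously delicate Ahlswede--Khachatrian ``pushing and pulling'' bookkeeping, and essentially all the difficulty lives there; the two normalizations and the recursion itself are routine by comparison.

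Finally, one should not expect a shortcut via the uniform Ahlswede--Khachatrian theorem. Approximating $\mu_p$ by the uniform measure on a slice of a blown-up ground set makes each per-coordinate indicator concentrate, reproducing only $\mu_{1/2}$ (or $\mu_0,\mu_1$), not a general $\mu_p$; and lifting a $t$-intersecting $\ff\subseteq2^{[n]}$ to the block-balanced subsets of $[bn]$ for $p=a/b$ does yield a $t$-intersecting family in $\binom{[bn]}{an}$, but one filling an exponentially small part of that slice, on which the uniform bound says nothing useful. The argument has to be carried out directly in the $\mu_p$-world.
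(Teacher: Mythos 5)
First, a point of orientation: this statement is Filmus's weighted complete intersection theorem, which the paper quotes from \cite{Fil} without proof, so there is no in-paper argument to compare against and your proposal must stand on its own. Its preliminary reductions are correct and standard: passing to a maximal (hence monotone) $t$-intersecting family, noting that $(i,j)$-compressions preserve monotonicity, the $t$-intersecting property and every layer size (hence $\mu_p$), and splitting on the last coordinate to obtain $\ff_0\subseteq\ff_1$ with $\ff_0$ $t$-intersecting, $\ff_1$ $(t-1)$-intersecting and the pair cross-$t$-intersecting. From that point on, however, the proposal is a plan rather than a proof. The evaluation $W_t(n-1)=w(n,t,p)$ --- the sharp formula for the extremal value of your recursion, with the correct family $\ff_{t,r}$ emerging on each interval of \eqref{eq8} and the truncation at $r^*$ --- \emph{is} the theorem, and you explicitly defer it (``essentially all the difficulty lives there''). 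You never state the strengthened inductive hypothesis, never verify that the optimal admissible pair at level $m$ splits into the optimal pairs at level $m-1$, and never treat the boundary values of $p$ or the terminal interval. Until that case analysis is actually carried out, nothing has been established; acknowledging that the hard part is hard does not discharge it.

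A secondary but substantive error: your closing paragraph, which argues that no reduction to the uniform Ahlswede--Khachatrian theorem is possible, considers the wrong lifting. The standard one is the up-closure inside a large slice: embed $[n]\subset[N]$ and set $\ff'=\{A\in\binom{[N]}{K}: A\supseteq B \text{ for some } B\in\ff\}$. This $\ff'$ is still $t$-intersecting (any two members contain a $t$-intersecting pair from $\ff$), and $|\ff'|/\binom{N}{K}\to\mu_p(\ff^{\uparrow})$ as $N\to\infty$ with $K/N\to p$, because the trace of a uniform $K$-subset of $[N]$ on $[n]$ converges to the $\mu_p$-distribution; likewise the densities of the families $\ff_{t,r}$ in the slice converge to their $\mu_p$-measures. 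So the uniform theorem does transfer, at least for $p$ below the critical threshold where the optimizing $r$ stays bounded, and this is essentially how the weighted bound is obtained in the literature. The route you rule out is therefore viable, and considerably shorter than rerunning the Ahlswede--Khachatrian induction in the $\mu_p$ world.
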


We are particularly interested in the case $p=1/s$. Doing simple algebraic manipulations, we see that for $p=1/s$ the right inequality on $p$ in the first line of \eqref{eq8} is equivalent to
\begin{equation}\label{eq9}t+2r\ge \frac s{s-2}t-\frac {2s-2}{s-2}=t+2\cdot \frac{t-s+1}{s-2}.\end{equation}
That is, if $t+2\big\lceil\frac{t-s+1}{s-2}\big\rceil\le m$ for some integers $m$ and $n\ge m$, then $w(n,t,1/s)=\mu_{1/s}(\ff_{t,r})$ for some $r$ such that $t+2r\le m$. Moreover, note that, for any integer $n', n'\ge m$, we have $w(n',t,1/s) = w(n,t,1/s)$ in these conditions.

The case of $p=\frac 1 s$ is actually very relevant to our question for the following reason. Given a $(t,0,\ldots,0)$-intersecting family $\G\subset [s]^n$, we can form a family $\mathcal P_1(\G)\subset 2^{[n]}$, where $\mathcal P_i(\G)$ is defined as follows:
$$\mathcal P_i(G) = \big\{G\subset [n]: G = \{i: y_i=i\}\text{ for some } (y_1,\ldots, y_n)\in \G\big\}.$$
If we assume that $\G$ is $\{1\}$-complete (which holds obviously for the largest $(t,0,\ldots, 0)$-intersecting family in $[s]^n$), then it is straightforward to see that $$|\G|/s^n=\mu_{1/s}(\mathcal P_1(G)).$$
The following theorem is the main result of this note.
\begin{thm}\label{thmmain2}
  Suppose that $n\ge s\ge 3$ and $t_1,\ldots, t_s$ are non-negative integers. Assume that $\G\subset [s]^n$ is $(t_1,\ldots, t_s)$-intersecting of maximum size and $\sum_{i=1}^s \big(t_i +2\big\lceil\frac{t_i-s+1}{s-2}\big\rceil\big)\le n$ (which is implied by $\sum_{i=1}^s t_i\le \frac {s-2}sn+s$). Then
  $$\frac{|\G|}{s^n} = \prod_{i=1}^s w(n,t_i,1/s).$$
\end{thm}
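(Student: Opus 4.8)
The plan is to combine three ingredients: the correlation inequality (Theorem~\ref{thm2}), the reduction to single-letter constraints (the Lemma of Section~3), and Filmus's biased $t$-intersection theorem (Theorem~\ref{thmold4}), and then to exhibit a matching construction. First I would establish the upper bound. Define $p(n,s,\vec t)$ as in Section~3. Exactly as in the proof of the Lemma, for each $i\in[s]$ pass to the $\{i\}$-complete family $\G(\{i\})\supseteq\G$, which is $(0,\ldots,t_i,\ldots,0)$-intersecting; since $\G\subseteq\bigcap_{i=1}^s\G(\{i\})$ and the sets $\{i\}$ are pairwise disjoint, iterating \eqref{eq5} gives $|\G|/s^n\le\prod_{i=1}^s\bigl(|\G(\{i\})|/s^n\bigr)\le\prod_{i=1}^s p\bigl(n,s,(0,\ldots,t_i,\ldots,0)\bigr)$. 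So it suffices to show $p\bigl(n,s,(0,\ldots,t_i,\ldots,0)\bigr)=w(n,t_i,1/s)$ for each $i$, under the stated numerical hypothesis $t_i+2\lceil\frac{t_i-s+1}{s-2}\rceil\le n$.

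For that single-letter statement, fix $i=1$ without loss of generality and let $\G$ be a largest $(t_1,0,\ldots,0)$-intersecting family. One may assume $\G$ is $\{1\}$-complete (replacing $\G$ by $\G(\{1\})$ does not decrease the size and preserves the $(t_1,0,\ldots,0)$-intersecting property, since coordinates carrying the letter $1$ are never altered). Under $\{1\}$-completeness, the map $\mathcal P_1$ sending $(y_1,\ldots,y_n)$ to $\{j:y_j=1\}$ pushes the uniform measure on $[s]^n$ forward to $\mu_{1/s}$ on $2^{[n]}$: indeed the fibre over a set $A$ has size $(s-1)^{n-|A|}$ when $A\in\mathcal P_1(\G)$ and $\{1\}$-completeness guarantees the whole fibre lies in $\G$, so $|\G|/s^n=\mu_{1/s}(\mathcal P_1(\G))$, exactly the identity quoted in the excerpt. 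Moreover $\mathcal P_1(\G)$ is $t_1$-intersecting in $2^{[n]}$: if $\vec y,\vec z\in\G$ then $\vec y\wedge\vec z$ has at least $t_1$ coordinates equal to $1$, i.e. $|\{j:y_j=1\}\cap\{j:z_j=1\}|\ge t_1$. Now Theorem~\ref{thmold4} (using $t_1\ge 2$; the cases $t_1\in\{0,1\}$ are handled directly, giving $w(n,t_1,1/s)=s^{-t_1}$, consistent with Theorem~\ref{thmold3}) yields $\mu_{1/s}(\mathcal P_1(\G))\le w(n,t_1,1/s)$, hence $|\G|/s^n\le w(n,t_1,1/s)$, and therefore $|\G|/s^n\le\prod_i w(n,t_i,1/s)$.

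For the lower bound I would build an explicit family attaining the product. Choose pairwise disjoint blocks $B_1,\ldots,B_s\subseteq[n]$ with $|B_i|=t_i+2r_i$, where $r_i$ is the optimizer in Filmus's extremal family $\ff_{t_i,r_i}$ at $p=1/s$; this is possible precisely because $\sum_i(t_i+2\lceil\frac{t_i-s+1}{s-2}\rceil)\le n$ ensures $\sum_i|B_i|\le n$ for the optimal choice of $r_i$ (here one invokes the stabilization remark in the excerpt: for $n'\ge m$ one has $w(n',t_i,1/s)=w(n,t_i,1/s)$, so we may take $r_i$ with $t_i+2r_i\le$ the relevant threshold). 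Define $\G=\{\vec y\in[s]^n:\ |\{j\in B_i:y_j=i\}|\ge t_i+r_i\text{ for all }i\in[s]\}$. Disjointness of the $B_i$ makes the $s$ constraints independent over the uniform measure, so $|\G|/s^n=\prod_{i=1}^s\mu_{1/s}(\ff_{t_i,r_i})=\prod_{i=1}^s w(n,t_i,1/s)$, and $\G$ is $(t_1,\ldots,t_s)$-intersecting because for any $\vec y,\vec z\in\G$ and any $i$, both have at least $t_i+r_i$ positions in $B_i$ carrying letter $i$ out of $|B_i|=t_i+2r_i$, so they share at least $t_i$ such positions. Combining with the upper bound finishes the proof.

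The main obstacle I expect is the clean verification that $p(n,s,(0,\ldots,t_i,\ldots,0))$ equals $w(n,t_i,1/s)$ rather than merely being bounded by it, together with the bookkeeping that the numerical hypothesis $\sum_i(t_i+2\lceil\frac{t_i-s+1}{s-2}\rceil)\le n$ is exactly what is needed both for the extremal $\ff_{t_i,r_i}$ to be the correct biased optimizer at $p=1/s$ (via \eqref{eq9}) and for the disjoint blocks in the construction to fit inside $[n]$; a secondary point is checking that after $\{1\}$-completion the family is still genuinely $(t_1,0,\ldots,0)$-intersecting and that $\mathcal P_1$ really transports the uniform measure to $\mu_{1/s}$, which relies essentially on $\{1\}$-completeness filling out whole fibres.
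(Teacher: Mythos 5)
Your proposal is correct and follows essentially the same route as the paper: the upper bound via iterating the correlation inequality \eqref{eq5} on the $\{i\}$-completions and identifying $p(n,s,(0,\ldots,t_i,\ldots,0))$ with $w(n,t_i,1/s)$ through the push-forward $\mathcal P_i$ to the $\mu_{1/s}$-biased setting and Filmus's theorem, and the lower bound via the same product construction on disjoint blocks of sizes $t_i+2r_i$. Your write-up in fact makes explicit a few details (the fibre count behind $|\G|/s^n=\mu_{1/s}(\mathcal P_1(\G))$ and the small-$t_i$ cases of Filmus's bound) that the paper only sketches in the surrounding discussion.
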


\begin{proof}
  The upper bound is akin to the proof of Theorem~\ref{thmmain}. Using \eqref{eq6}, we get that
  $$\frac{|\G|}{s^n}\le \prod_{i=1}^sp(n,s,(0,\ldots,0,t_i,0\ldots,0)=\prod_{i=1}^s w(n,t_i,1/s).$$
We go on to the proof of the lower bound. Define $r_i$ so that $w(n,t_i,1/s) = \mu_{1/s}(\ff_{t,r_i})$ and take a partition of $[n]$ into $s$ pairwise disjoint sets $X_1,\ldots, X_s$ so that $|X_i|=t_i+2r_i$ for $i=1,\ldots, s-1$ and $|X_s| = n-\sum_{i=1}^{s-1}(t_i+2r_i)\ge t_s+2r_s$. Here, the last inequality holds by  \eqref{eq9} and the subsequent analysis. For each $i\in [s]$, let $\ff^{i}\subset [s]^{X_i}$ be an $\{i\}$-complete family of words so that $\mathcal P_i(\ff^i)\subset 2^{X_i}$ is isomorphic to $\ff_{t,r_i}$. Note that $\ff_{t,r_i}$ `fits' into $2^{X_i}$ by the choice of $|X_i|$.   For a word $\vec y$ define $\vec y|_{X_i}$ as the part of the vector $\vec y$ induced on coordinates from $X_i$. Let the family $\ff\subset [s]^n$ be defined as follows:
$$\ff = \big\{\vec y=(y_1,\ldots,y_n)\in [s]^n: \text{ for all }i\in [s] \text{ and }i\in X_i\text{ we have } \vec y|_{X_i}\in \ff^{i}\big\}.$$
It is easy to see that, first, $\ff$ is $(t_1,\ldots,t_s)$-intersecting and, second, $$|\ff| = \prod_{i\in [s]}|\ff^i| = s^n \prod_{i=1}^s w(|X_i|,t,1/s) = s^n \prod_{i=1}^s w(n,t,1/s).$$
This completes the proof of the theorem.
\end{proof}

\end{document}